\numberwithin{equation}{section}
\def\irr#1{{\Irr}(#1)}
\def\irra#1#2{{\Irr}_{#1}(#2)}
\def\irra#1#2{{\Irr}_{#1}(#2)}
\def\zent#1{{\bf Z}(#1)}
\def\syl#1#2{{\rm Syl}_{#1}(#2)}
\def\nor{\triangleleft}
\def\norm#1#2{{\bf N}_{#1}(#2)}
\def\cent#1#2{{\bf C}_{#1}(#2)}
\let\phi=\varphi
\def\sbs{\subseteq}
\newtheorem{lem}[subsection]{Lemma}
\newtheorem{cor}[subsection]{Corollary}
\newtheorem{teo}[subsection]{Theorem}
\newtheorem*{thm*}{Theorem}
\newtheorem*{thmA}{Theorem A}
\newtheorem*{thmB}{Theorem B}
\theoremstyle{definition}
\newtheorem{rem}[subsection]{Remark}
\theoremstyle{definition}
\theoremstyle{definition}
\theoremstyle{definition}
\newcommand{\Irr}{\operatorname{Irr}}
\begin{document}

\vspace*{-0.2cm}

\author{Adele Maltempo}
\address[A. Maltempo]{School of Mathematics, Watson Building, University of Birmingham, Edgbaston,
Birmingham B15 2TT, UK}
\email{AXM2451@student.bham.ac.uk}

\author{Carolina Vallejo}
\address[C. Vallejo]{Dipartamento di Matematica e Informatica `Ulisse Dini', Universit\`a di Firenze,
  50134 Firenze, Italy}
\email{carolina.vallejorodriguez@unifi.it}

\title[The McKay conjecture with coprime group automorphisms after Okuyama-Wajima]{The McKay conjecture with coprime group automorphisms and the Okuyama-Wajima argument}
\begin{abstract}
Let $A$ and $G$ be finite groups. Suppose that $A$ acts coprimely on $G$ stabilizing $N\nor G$. Let $\theta \in \irr N$ be $A$-invariant. We prove that the number of $A$-invariant irreducible characters of $G$ that lie over $\theta$ can be counted in terms of the $(A, \theta)$-good conjugacy classes of $G_\theta/N$, where $G_\theta$ is the inertia subgroup of $\theta$ in $G$. This result generalizes a classic result of Gallagher and can be used to prove the following: if $P$ is an $A$-invariant Sylow $p$-subgroup of $G$ and $G$ is $p$-solvable, then there exists an $A$-equivariant (McKay) bijection between the irreducible characters of degree prime to $p$ of $G$ and those of $\norm G P $. While this is a consequence of a recent result of D. Rossi,
our approach here is independent of Rossi's and follows the original idea of the proof of the McKay conjecture for $p$-solvable groups. In particular, we rely on the so-called Okuyama-Wajima argument to deal with characters above Glauberman correspondents.

 \end{abstract}
   \keywords{McKay conjecture with coprime group automorphisms, $p$-solvable groups, Glauberman--Isaacs correspondence, $\theta$-good conjugacy classes, Gallagher's counting argument, Okuyama-Wajima argument}
      \subjclass[2010]{20C15; 20C25}
      \thanks{We thank Luis Pablo Colmenar for pointing out an error in the published version of this note, which is undergoing the publisher's retraction process. We are indebted to Noelia Rizo for useful conversations and a thorough reading of this revised manuscript}
\maketitle

\section*{Introduction}
\noindent

\noindent

Let $G$ be a finite group. When $N \nor G$ and $\theta\in \irr N$ is $G$-invariant, we say that an element $Ng \in G/N$ is $\theta$-good in $G$  if $\theta$ extends to $N \langle g , x \rangle$ for every $x \in G$ such that $N \langle g , x \rangle/N$ is abelian.
Notice that if $Ng \in G/N$ is a $\theta$-good, then every conjugate of $Ng$ is also $\theta$-good. We can therefore talk about $\theta$-good conjugacy classes of $G/N$.
A classical result by Gallagher \cite{Gal70} asserts that the number $|\irr{G|\theta}|$ of irreducible characters of $G$ lying over $\theta$ equals the number of $\theta$-good conjugacy classes of $G/N$. 
Gallagher's counting argument is very useful in character theory and it was used by Okuyama and Wajima to offer the first proof of the Alperin-McKay conjecture for $p$-solvable groups \cite{OW80}.

Different versions of Gallagher's counting argument have appeared in the literature. For instance, fixing a prime $p$, the number of irreducible $p$-Brauer character lying under a fixed $G$-invariant Brauer character of $N$ equals the number of $p$-regular $\theta$-good classes of $G/N$. For a set of primes $\pi$, if $G$ is $\pi$-separable and $N$ is a $\pi'$-group, the number $|{\rm I}_\pi(G|\theta)|$ of Isaacs $\pi$-partial characters over $\theta$ equals the number of $\theta$-good classes of $\pi$-elements of $G/N$ \cite[Theorem 6.2]{Isa88}. These results can be used to obtained McKay-like bijections through the Okuyama-Wajima argument as in \cite{Wol90}.  Not every generalization of Gallagher's counting argument can be used to prove new properties of McKay bijections for $p$-solvable groups. Recently, a different generalization of Gallagher's counting theorem has been established by J. Murray. In \cite[Thm. A]{Mur23}, it is shown that these ideas can be extended to count the number of real characters of a group $G$
lying over a fixed character of a normal subgroup. However, 
the group ${\rm GL}(2,3)$ for $p=3$ shows that, in general, it is not possible to construct McKay bijections that preserve real characters.

Suppose that $A$ acts coprimely on $G$. Let $N \nor G$ be $A$-invariant. We show that we can count the number $|\irra A {G|\theta}|$ of $A$-invariant irreducible characters of $G$ lying over some $A$-invariant $\theta\in \irr N$ in terms of certain conjugacy classes of $G/N$
 that we will call $(A, \theta)$-good. We use this result to prove that whenever $A$ acts coprimely on a $p$-solvable group $G$, there exist $A$-equivariant McKay bijections for any prime $p$.
This is not a new result, as it follows from a recent theorem of Rossi \cite[Thm. B]{Ros23}. 
Our approach, however, is different and relies on more elementary character-theoretic arguments.
 Indeed, the proof of \cite[Thm. B]{Ros23} for $p$-solvable groups depends on results from \cite[\S 5]{NS14},
  which in turn follow from the work of F. Ladisch \cite[Corollary 11.3]{Lad11}. 
  Of course, using these more sophisticated arguments and \cite{CS25}, one can derive the existence of $A$-equivariant McKay bijections for $G$ whenever $A$ acts on $G$ by automorphisms and stabilizes a Sylow $p$-subgroup of $G$ (that is, without the coprimality hypothesis we impose here).
Nonetheless, for the case where $G$ is $p$-solvable and the action of $A$ is coprime, our method yields a useful simplification.

Under the above assumptions, if $\theta \in \irr N$ is $G$-invariant, we say that an element $Ng \in G/N$ is $(A, \theta)$-good in $G/N$
if $Ng$ is $A$-invariant (that is if $Ng^a=Ng$ for every $a \in A$) and for every $A$-invariant $Nx \in G/N$ such that $N\langle g, x \rangle /N$ is abelian, $\theta$ extends to $N\langle g, x \rangle$.
Notice that if the action of $A$ on $G$ is trivial, then the $(A, \theta)$-good elements are precisely the $\theta$-good elements.
We will say that a conjugacy class of $G/N$ is $(A, \theta)$-good if it contains $(A, \theta)$-good elements.
Notice that if $Ng$ is an $(A, \theta)$-good element, it is not true that every $Ng^x$ for $x \in G$ is $(A, \theta)$-good. This contrast the $\theta$-good setting.
On the other hand, $(A, \theta)$-good conjugacy classes are $A$-invariant. Our first result is the following.

\begin{thmA} Let $A$ act coprimely on $G$. Let $N \nor G$ be $A$-invariant. Let $\theta \in \irr N$ be $A$-invariant.
Then the number $|\irra A{G|\theta}|$ of irreducible $A$-invariant characters of $G$ lying over $\theta$ equals the number of $(A, \theta)$-good conjugacy classes of $G_\theta/N$, where $G_\theta$ is the stabilizer of $\theta$ in $G$.
\end{thmA}

Recall that, for a prime $p$ and a finite group $G$, we denote by $\irra {p'} G$ the set of irreducible characters of $G$ whose degree 
is coprime to $p$.
As mentioned above, thanks to the so-called Okuyama-Wajima argument \ref{thm:OW}, we can use Theorem A to prove the following.

\begin{thmB} Let $A$ act coprimely on $G$. Let $p$ be a prime  and $P \in \syl p G$ be an $A$-invariant Sylow $p$-subgroup of $G$. 
Suppose that $G$ is $p$-solvable.
There exists an $A$-equivariant bijection
$$\irra {p'} {G} \to \irra {p'} {\norm G P}\, .$$
\end{thmB}

This paper is structured as follows. In Section \ref{Gallagher} we prove Theorem A. In Section \ref{OW}, we use Theorem A to obtain some consequences of the Okuyama--Wajima argument, specifically Theorem \ref{thm:OWrevisited}. Finally, in Section \ref{reduction}, we prove Theorem B. We follow the notation \cite{Isa76, Nav18}, with \cite{Nav18} serving as a guiding reference for our proofs.

\section{Counting $A$-invariant characters after Gallagher, Glauberman and Isaacs}\label{Gallagher} 

\noindent Our aim in this section is to prove Theorem A from the Introduction. Let $N\nor G$ and $\theta \in \irr N$. We write
$\irr{G|\theta}$ to denote the subset of $\irr G$ consisting of characters $\chi$ such that $[\chi_N, \theta]\neq 0$, and we often refer to characters in $\irr {G|\theta}$ as characters lying over $\theta$. Suppose that $\theta$ is $G$-invariant,
Gallagher's counting argument asserts that the number $|\irr{G|\theta}|$ equals the number of $G/N$ conjugacy classes consisting of $\theta$-good elements in $G/N$, where an element $Ng \in G/N$ is said to be $\theta$-good in $G/N$ if all the extensions of $\theta$ to $N\langle g \rangle$ are $D$-invariant for $D/N=\cent {G/N}{Ng}$. This is equivalent to ask that $\theta$ extends to $N \langle g , x \rangle$ for every $x \in G$ such that $N \langle g , x \rangle/N$ is abelian; by \cite[Lem. 5.13]{Nav18}.
The original proof of Gallagher follows by adapting a computation of I. Schur and can be found in \cite[\S 2]{Gal70}. 

\smallskip

%
%

Suppose that $A$ acts coprimely on $G$ stabilizing $N \nor G$.
Let $\theta\in \irr N$ be $GA$-invariant, where $GA$ denotes the semidirect product of $G$ and $A$.
Recall that we say that an element $Ng \in G/N$ is $(A, \theta)$-good in $G/N$, if $Ng$ is $A$-invariant and for every $A$-invariant $Nx \in G/N$ 
such that $N\langle g, x \rangle /N$ is abelian, $\theta$ extends to $N\langle g, x \rangle$.
Moreover we say that a conjugacy class of $G/N$ is $(A, \theta)$-good if it contains some $(A, \theta)$-good element.
As already mentioned, even when $Ng$ is $(A, \theta)$-good not every element in the conjugacy class of $Ng$ is $(A, \theta)$-good.
Notice also that $(A, \theta)$-good conjugacy classes contain $A$-invariant elements, and hence they are $A$-invariant subsets themselves. 


\smallskip

We restate and prove the statement of Theorem A here for future reference within this text. 
The proof depends on properties of the Glauberman--Isaacs correspondence as well as on Gallagher's counting argument.
Recall that when $A$ acts coprimely on $G$, there is a one-to-one correspondence between the set $\irra A G$ and the set $\irr{\cent G A}$, 
where $\cent G A=\{ g \in G \ | \ g^a =g \text{ for all } a \in A \}$. This correspondence is known as
 the Glauberman--Isaacs correspondence. 
 For properties of the Glauberman--Isaacs correspondence we refer the reader to \cite{Wol79}.
 We will denote by $GA$ the semidirect product of $G$ and $A$ with respect to the prescribed action.

\begin{teo}\label{thm:A-invariant} 
Let $A$ act coprimely on $G$. Let $N \nor G$ be $A$-invariant. Let $\theta \in \irr N$ be invariant under the action of $GA$.
Then the number $|\irra A{G|\theta}|$ of irreducible $A$-invariant characters of $G$ lying over $\theta$ equals the number of conjugacy classes of $G/N$ that contain $(A, \theta)$-good elements in $G/N$. 
\end{teo}

\begin{proof} Write $C=\cent G A$.
Since $A$ acts coprimely on $G$, we have that $\cent {G/N} A = NC/N$ by \cite[Corollary 3.28]{Isa08}. Let $\theta^* \in \irr{N \cap C}$ be the Glauberman--Isaacs correspondent of $\theta$. Applying \cite[Lemma 2.5]{Wol79} twice, we have that 
\[ |\irra A {G|\theta}| = |\irr{C|\theta^*}| = |\irra A {NC|\theta}|\, . \] 
Using \cite[Corollary 1.4]{Wol79} and the main theorem in \cite[\S 2]{Gal70}, we have that $|\irra A {G|\theta}| = |\irr {NC|\theta}|$ equals the number of $\theta$-good conjugacy classes of $NC/N$. By \cite[Theorem 3.26]{Isa08}, the map $\mathcal{K} \to \mathcal{K} \cap NC/N$ defines a bijection between the set of $A$-invariant conjugacy classes of $G/N$ and the set of conjugacy classes of $NC/N$. 

We claim that this bijection sends the set of conjugacy classes of $G/N$ that contain $(A, \theta)$-good elements onto the set of conjugacy classes of $NC/N$ 
consisting of $\theta$-good elements. 
Let $Ng$ be a $(A, \theta)$-good element in a conjugacy class $\mathcal K$ of $G/N$. Then $\mathcal K$ is $A$-invariant and
$Ng\in NC/N$ so $Ng=Nc$ for some $c \in C$.  In particular, $N\langle g \rangle=N\langle c \rangle$, and $\mathcal K \cap NC/N$ is the conjugacy class of $Nc$ in $NC/N$.
For every $Nx \in NC/N$ such that $N\langle c, x\rangle /N$ is abelian, we have that $N\langle c, x \rangle=N\langle c \rangle N \langle x \rangle=N\langle g, x \rangle$. By assumption $\theta$ extends to $N\langle c, x \rangle$, so $Nc$ is $\theta$-good in $NC/N$.
Now, if $Nc$ is $\theta$-good in $NC/N$ we have that the conjugacy class of $G/N$ containing $Nc$ is $A$-invariant. Let $Nx \in G/N$ be $A$-invariant such that $N\langle c, x \rangle/N$ is abelian. Since $Nx \in NC/N$ and $Nc$ is $\theta$-good in $NC/N$, we have that $\theta$ extends to $N\langle c, x \rangle$. Hence $Nc$ is $(A, \theta)$-good. 
\end{proof}


\begin{rem} Unfortunately, the of definition of $(A, \theta)$-good elements appears to be useful only when the action of $A$ on $G$ is coprime, although it could also work when only the action of $A$ on $G/N$ is assumed to be coprime.
Say $Ng\in G/N$ is $(A, \theta)$-good if $Ng\in G/N$ is $A$-invariant and for every $A$-invariant $Nx \in G/N$ such that $N\langle g, x \rangle/N$ is abelian
$\theta$ admits an $A$-invariant extension $\mu$ to $N\langle g, x \rangle$. 
The analogue of Theorem \ref{thm:A-invariant} does not generally hold with this definition.
For instance, consider the
alternating group $G={\sf A}_6$, $N=1$ and the action of $A=\langle (1 2)\rangle \leq {\sf S}_6 $ on $G$ by conjugation. 
Then $|\irra A G|$ equals the number of $A$-invariant conjugacy classes of $G$ by Brauer's lemma on the character table \cite[Theorem 2.3]{Nav18}. 
However, not every $A$-invariant conjugacy class of $G$ contains some $A$-invariant element.  The conjugacy class of $(1 2 3 )(4 5 6)$ intersects trivially $\cent {G} A=G_{\{1,2\}}\rtimes \langle (12) (34) \rangle$ (here $G_{\{1,2\}}$ is the subgroup that fixes both $1$ and $2$). 
\end{rem}

We can derive the following statement that, under certain circumstances, allows us to count $|\irra A {G|\theta}|$ more generally when $\theta$ is not assumed to be $GA$-invariant but we know that the set $\irra A {G|\theta}$ is non-empty. When $A$ acts on $G$ by automorphisms stabilizing $N \nor G$, we have that $N$ and $G$ are normal subgroups of $GA$. Moreover, whenever $\theta \in \irr N$ we have that $\irra A {G|\theta}=\irra{GA}{G|\theta}$. 
We adopt this perspective in the following results.

\begin{cor}\label{cor:A-invariant}
Let $N$ and $G$ be normal subgroups of a group $A$ with $N\sbs G$. Let $\theta \in \irr N$. Suppose that $(|A_\theta/G_\theta|, |G_\theta|)=1$. 
If $ \irra A {G|\theta}$ is non-empty,
then the number $|\irra A{G|\theta}|$ equals the number of  $(D, \theta)$-good conjugacy classes of $G_\theta/N$,
where $D$ is a complement of $G_\theta$ in $A_\theta$. 
\end{cor}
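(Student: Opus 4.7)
The plan is to reduce Corollary \ref{cor:A-invariant} to Theorem \ref{thm:A-invariant} by passing from the triple $(A, G, N)$ to the stabilizer triple $(A_\theta, G_\theta, N)$ via the Clifford correspondence. Since $N \nor A$, we have $N \nor A_\theta$; since $G \nor A$, we also have $G_\theta = G \cap A_\theta \nor A_\theta$; and $\theta$ is $A_\theta$-invariant by definition. Therefore Theorem \ref{thm:A-invariant} applies to $(A_\theta, G_\theta, N, \theta)$ and yields that $|\irra{A_\theta}{G_\theta|\theta}|$ equals the number of conjugacy classes of $G_\theta/N$ consisting of $\theta$-good elements in $A_\theta$. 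It therefore suffices to produce a bijection $\irra A{G|\theta} \to \irra{A_\theta}{G_\theta|\theta}$.

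The natural candidate is the Clifford correspondence $\psi \mapsto \psi^G$, which already gives a bijection $\irr{G_\theta|\theta}\to\irr{G|\theta}$; I need to show it restricts to the $A$- and $A_\theta$-invariant characters on either side. The non-emptiness hypothesis is used here precisely to deduce $A = A_\theta G$: if $\chi \in \irra A{G|\theta}$, then by Clifford's theorem the set of irreducible constituents of $\chi_N$ is a single $G$-orbit containing $\theta$, and its $A$-invariance (inherited from $\chi$) forces $\theta^a$ to be a $G$-conjugate of $\theta$ for every $a \in A$, whence $A = A_\theta G$.

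Granting $A = A_\theta G$, I would check both directions as follows. If $\chi \in \irra A{G|\theta}$ has Clifford correspondent $\psi$, then for $a \in A_\theta$ the character $\psi^a \in \irr{G_\theta|\theta}$ satisfies $(\psi^a)^G = \chi^a = \chi$, so $\psi^a = \psi$ by uniqueness of the Clifford correspondent. Conversely, if $\psi \in \irra{A_\theta}{G_\theta|\theta}$ and $\chi = \psi^G$, then for any $a \in A$ written as $a = bg$ with $b \in A_\theta$ and $g \in G$, one computes $\chi^a = (\chi^b)^g = \chi^g = \chi$, using that $\chi^b = (\psi^b)^G = \psi^G = \chi$ and that inner automorphisms of $G$ fix characters of $G$. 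The one step that requires attention is the identity $A = A_\theta G$; once in place, the rest consists of standard manipulations with the Clifford correspondence followed by a direct invocation of Theorem \ref{thm:A-invariant}.
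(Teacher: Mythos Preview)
Your proof is correct and follows essentially the same approach as the paper: both use the Clifford correspondence $\irr{G_\theta|\theta}\to\irr{G|\theta}$, derive $A=A_\theta G$ from the non-emptiness hypothesis exactly as you do, and then invoke Theorem~\ref{thm:A-invariant} for the triple $(A_\theta,G_\theta,N,\theta)$. The paper phrases the middle step slightly differently---it notes that the Clifford bijection is $A_\theta$-equivariant, giving $|\irra{A_\theta}{G_\theta|\theta}|=|\irra{A_\theta}{G|\theta}|$, and then observes that $A=GA_\theta$ forces $\irra{A}{G|\theta}=\irra{A_\theta}{G|\theta}$---but this is just a repackaging of your two-direction check.
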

\begin{proof} We first show that under our hypotheses  $\displaystyle \irra{A}{G|\theta}=\irra{A_{\theta}}{G|\theta}$.

 Let $\chi \in \irra A {G|\theta}$. For every $a \in A$, we have that $\theta$ and $\theta^a$ lie under $\chi$. Hence there is some $g \in G$ such that $\theta^g =\theta^a$.
       In particular $ag^{-1}\in A_\theta$. We have shown that $A=GA_\theta$.
We can therefore conclude that $\irra{A}{G|\theta}=\irra{A_{\theta}}{G|\theta}$. 

By the Clifford correspondence, we know that character induction induces an $A_\theta$-equivariant bijection
    \begin{align*}
        \irr{G_{\theta}|\theta} &\rightarrow \irr{G|\theta}\, .
    \end{align*}
In particular, 
       $$ |\irra{A_{\theta}}{G_{\theta}|\theta}|=|\irra{A_{\theta}}{G|\theta}|\, .$$
Using the Schur-Zassenhaus theorem $A_\theta=G_\theta D$, where $D$ is a complement of $G_\theta$. 
We can apply Theorem \ref{thm:A-invariant} that tells us that 
\[ |\irra A {G|\theta}|=|\irra {A_\theta}{G_\theta|\theta}|=|\irra D{G_\theta |\theta}|\] 
is the number of $(D, \theta)$-good conjugacy classes of $G_\theta/N$.
\end{proof}

We care to mention that the equality $|\irra A{G|\theta}|=|\irra {A_\theta}{G_\theta|\theta}|$ does not always hold as shown by the group $A={\sf C}_2 $ acting on $G={\sf C}_3\times {\sf C}_3$ with $\cent G A=1$, $N$ a proper $A$-invariant subgroup of $G$, and $\theta \in \irr N$ faithful. In this case $|\irra A {G|\theta}|=0$ while $|\irra {A_\theta}{G_\theta | \theta}|=|\irr{G|\theta}|=3$. 

When the action of $A$ on $G$ is coprime, the condition that $\irra A{G|\theta}$ is non-empty  is satisfied, for instance, if $\theta$ is $A$-invariant by \cite[Theorem 13.28 and Corollary 13.30]{Isa76}. 

%
%
%
%
%
%
%

\section{The Okuyama--Wajima argument revisited}\label{OW}

\noindent The aim of this section is to apply Theorem \ref{thm:A-invariant} to derive a new consequence of the Okuyama-Wajima argument, namely Theorem \ref{thm:OWrevisited}.

The Okuyama-Wajima argument concerns the character theory above Glauberman correspondents. Recall that when a $p$-group $Q$ acts by automorphisms on a group $K$ of order coprime to $p$, then the Glauberman correspondence is a canonical bijection $^*\colon \irra Q K \to \irr {\cent K Q}$ between the set $\irra Q K$ of $Q$-invariant irreducible characters of $K$ and the set $\irr{\cent K Q}$ of irreducible characters of the fixed-point subgroup under the action. In fact, given $\theta \in \irra Q K$, its Glauberman correspondent $\theta^*$ is the unique irreducible constituent of the restriction $\theta |_{\cent K Q}$ of $\theta$ to $\cent K Q$ appearing with multiplicity coprime to $p$ (see, for instance, \cite[Thm. 2.9]{Nav18}). When the group acting is a $p$-group, then  the Glauberman--Isaacs correspondence is precisely the Glauberman correspondence. 

\begin{teo}[The Okuyama--Wajima Argument]\label{thm:OW}
  Let $G$ be a group, let $p$ be a prime. Suppose $K \nor G$ has order not divisible by $p$. Let $Q$ be a $p$-subgroup of $G$ such that $KQ \nor G$. Let $H=\norm{G}{Q}$ and $C=\cent{K}{Q}$. Let $\theta \in \irr{K}$ be $Q$-invariant and let $\theta^*\in \irr{C}$ be the Glauberman correspondent of $\theta$. Suppose that $C \leq U \leq H$ is such that $U/C$ is abelian. Then $\theta$ extends to KU if and only if $\theta^*$ extends to U.
\end{teo}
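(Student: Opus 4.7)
The plan is to leverage Theorem~\ref{thm:A-invariant} to translate both extendibility conditions into counts of good cosets, and then reduce to a coset-by-coset statement that can be verified via a local Glauberman-type correspondence. First I would observe that, since the Glauberman correspondence is equivariant under $B=\norm{A}{Q}$ and $U\leq B$, the characters $\theta$ and $\theta^*$ have the same stabilizer in $U$. If either fails to be $U$-invariant, neither extension can exist (stability is necessary), and the statement holds vacuously; so I may assume $\theta$ and $\theta^*$ are $U$-invariant. A Frattini-style argument applied to $KQ \nor A$ gives $\norm{K}{Q}=C$, hence $U\cap K=C$ and $KU/K\cong U/C$ is abelian.

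Next I would use that, for a $KU$-invariant $\theta$ with $KU/K$ abelian, $\theta$ extends to $KU$ if and only if $|\irr{KU|\theta}|=|KU/K|$: one direction is Gallagher's correspondence, and the other follows from the fact that the number of irreducible projective representations of an abelian group with a given cohomology class equals the order of the group exactly when that class is trivial. Applying Theorem~\ref{thm:A-invariant} with $A=G=KU$ and $N=K$, and noting that conjugacy classes of the abelian quotient $KU/K$ are singletons, I get
\[
|\irr{KU|\theta}| = \#\{u \in U/C : u \text{ is } \theta\text{-good in } KU\}.
\]
An analogous application with $A=G=U$ and $N=C$ yields the corresponding formula for $|\irr{U|\theta^*}|$. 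Hence the theorem reduces to showing: every $u \in U$ is $\theta$-good in $KU$ if and only if every $u \in U$ is $\theta^*$-good in $U$.

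I would then attack this via a coset-by-coset statement: for each $u\in U$, $u$ is $\theta$-good in $KU$ iff $u$ is $\theta^*$-good in $U$. Since the quotients $KU/K$ and $U/C$ are abelian, the stabilizer quotients appearing in the definition of a good element are the full groups, so the equivalence becomes: every extension of $\theta$ to $K\langle u\rangle$ is $KU$-invariant iff every extension of $\theta^*$ to $C\langle u\rangle$ is $U$-invariant. Both $K\langle u\rangle/K$ and $C\langle u\rangle/C$ are cyclic, so the extensions in question exist; the content is entirely in identifying their stabilizers.

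The hard part is producing a $U$-equivariant matching between these two sets of extensions. The obstacle is that $Q$ does not in general normalize $K\langle u\rangle$, so the Glauberman correspondence cannot be invoked directly at this level. My plan would be to pass to the overgroup $K\langle u\rangle Q$ (which is indeed a group, as $u\in\norm{A}{Q}$), decompose $u=u_p u_{p'}$ into its $p$- and $p'$-parts, absorb $u_p$ into a Sylow-conjugate of $Q$ inside $B$, and apply the Glauberman correspondence to the resulting coprime action on the $p'$-group generated by $K$ and $u_{p'}$. The $U$-equivariance of the matching would then be inherited from the $B$-equivariance of the global correspondence. This local analysis is the genuine technical content of the Okuyama--Wajima argument.
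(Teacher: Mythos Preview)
The paper does not prove this theorem; it simply cites \cite{OW80} and \cite[Thm.~8.16]{Nav18}. Your proposal attempts an actual proof, so there is no ``same approach'' to compare against.

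Your reduction via Theorem~\ref{thm:A-invariant} is correct: for $U/C$ abelian, $\theta$ extends to $KU$ if and only if every coset of $K$ in $KU$ is $\theta$-good, and likewise for $\theta^*$. But this reformulation does not bypass the technical core of the Okuyama--Wajima argument, as you yourself acknowledge in the final paragraph. In fact, the coset-by-coset equivalence you aim for is precisely what the paper derives \emph{from} Theorem~\ref{thm:OW} in the proof of Corollary~\ref{cor:OW} (via \cite[Lem.~5.13]{Nav18}): $Cs$ is $\theta^*$-good iff $Ks$ is $\theta$-good, because $\theta^*$ extends to $\langle C,s,b\rangle$ iff $\theta$ extends to $K\langle s,b\rangle$, and that is Theorem~\ref{thm:OW} applied to the subgroup $\langle C,s,b\rangle$. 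So within the paper's logic the implication runs from Theorem~\ref{thm:OW} to the good-element correspondence; you are proposing to reverse it, which requires an independent proof of that correspondence---and that is exactly what Okuyama--Wajima supplies.

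The sketch in your last paragraph (decompose $u=u_p u_{p'}$, absorb $u_p$ into a conjugate of $Q$, apply the Glauberman correspondence to the action on $K\langle u_{p'}\rangle$, and inherit $U$-equivariance) has the right shape---it is essentially the skeleton of \cite[Thm.~8.16]{Nav18}---but it is not a proof. Several points need genuine work: why the resulting map between extensions of $\theta$ and extensions of $\theta^*$ is well-defined independent of the choices made, why it is $U$-equivariant, and how the canonical extension of $\theta$ to $KQ$ mediates the passage through the $p$-part. As written, the proposal relocates the difficulty rather than resolving it.
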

\begin{proof} See \cite[Thm. 2]{OW80}. Notice that in the statement of \cite[Thm. 2]{OW80}, $Q$ is a Sylow $p$-subgroup of $G$ and $U=H$, but these stronger hypotheses are not necessary (using \cite[Thm. 5.10 and Cor. 6.2]{Nav18}, for instance). A more detailed proof can be found in \cite[Thm. 8.6]{Nav18}. 
\end{proof}

The statement of Theorem~\ref{thm:OW} holds more generally under the weaker assumption that $K$ and $Q$ have coprime orders, by work of Wolf~\cite[Thm.~1.5]{Wol90} (for the case where $Q$ is solvable see also \cite[Thm. 6.10]{Isa18}). 
It also remains valid when the quotient 
$U/C$ is not assumed to be abelian; however, the proof of this seemingly innocuous generalization requires completely different ideas and a fairly great amount of work, see for instance \cite[Cor. 11.3 and Thm. 4.3]{Lad11} or \cite[Thm. 6.5]{Tur08} combined with \cite[Thm. 7.12]{Tur09}. When the prime $p=2$, T. Wolf found a character-theoretical proof of this fact \cite[Thm. 1.5]{Wol90}, relying on properties of the Isaacs correspondence \cite{Isa73}. 

As mentioned in the Introduction and for the purpose of this note, the Okuyama-Wajima argument will suffice. We need to derive some consequences from it using the results contained in the previous section. Before doing so, we need to prove an easy, yet useful, lemma. Given groups $K, G\nor A$ with $K\sbs G$, $P\in \syl p G$, and a $P$-invariant $\theta \in \irr K$, we will be interested in counting the number
$$|\irra{p',A}{G|\theta}|$$
of irreducible $A$-invariant characters of $G$ of degree coprime to $p$ that lie over $\theta$. 
The following technical lemma, along with several ideas from its proof, will be used throughout the remainder of this section.

\begin{lem}\label{lem:technical}
    Let $A$ be a group, and let $G \nor A$. Suppose that $B \leq A$ is such that $A=GB$, and write $H=G \cap B$. Let $K \sbs G$, with $K \nor A$.
    Assume that $G=KH$ and write $C=K\cap H$.
     Let $\theta \in \irr{K}$ and let $\phi \in \irr{C}$. 
     If $B_{\theta}=B_{\phi}$
and
    \begin{equation*}
        |\irra{A_{\theta}}{G_{\theta}|\theta}|=|\irra{B_{\theta}}{H_{\phi}|\phi}|,
    \end{equation*}
    then we have that 
    \begin{equation*}
        |\irra{A}{G|\theta}|=|\irra{B}{H|\phi}|.
    \end{equation*}
Let $p$ be a prime and $P \in \syl p {G}$. Assume that $Q=P\cap H \in \syl p H$ and that $\theta$ is $P$-invariant. 
If $B_{\theta}=B_{\phi}$ and
    \begin{equation*}
        |\irra{p',A_{\theta}}{G_{\theta}|\theta}|=|\irra{p',B_{\theta}}{H_{\phi}|\phi}|,
    \end{equation*}
    then we have that
    \begin{equation*}
        |\irra{p',A}{G|\theta}|=|\irra{p',B}{H|\phi}|.
    \end{equation*}
\end{lem}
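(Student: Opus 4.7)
My plan is to reduce both sides of the claimed equality via the Clifford correspondence and then invoke the hypothesis. For the first identity, one starts from the Clifford bijection $\irr{G_\theta|\theta} \to \irr{G|\theta}$, $\psi_0 \mapsto \psi_0^G$, and notes that it is $A_\theta$-equivariant since $A_\theta$ normalizes $K$ and $G_\theta$ and fixes $\theta$; hence it restricts to a bijection $\irra{A_\theta}{G_\theta|\theta} \to \irra{A_\theta}{G|\theta}$. I would then observe that a character $\psi \in \irr{G|\theta}$ which is $A_\theta$-invariant is $A$-invariant exactly when $A = GA_\theta$: such a $\psi$ is automatically $G$-invariant, so $A_\theta$-invariance upgrades to $GA_\theta$-invariance; conversely, if $A \neq GA_\theta$ then for some $a \in A$ the character $\psi^a$ lies over $\theta^a \notin G\cdot\theta$, forcing $\psi^a \neq \psi$. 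It follows that $|\irra{A}{G|\theta}| = |\irra{A_\theta}{G_\theta|\theta}|$ when $A = GA_\theta$, and equals $0$ otherwise.

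The parallel argument on the right-hand side, applied to $C \nor B$ and $\phi \in \irr{C}$, gives $|\irra{B}{H|\phi}| = |\irra{B_\phi}{H_\phi|\phi}|$ when $B = HB_\phi$, and $0$ otherwise. Substituting $B_\phi = B_\theta$ and combining with the assumed equality $|\irra{A_\theta}{G_\theta|\theta}| = |\irra{B_\theta}{H_\phi|\phi}|$, the conclusion follows provided the two conditions ``$A = GA_\theta$'' and ``$B = HB_\phi$'' are simultaneously true or simultaneously false. I expect this step to be the main obstacle: it amounts to the set-theoretic identity $B \cap GA_\theta = HB_\theta$, which I would establish by a Dedekind-type modular argument using $A = GB$, $H = G \cap B$, together with the orbit-theoretic consequence of $B_\theta = B_\phi$ that $C \sbs B_\theta$ and hence $C \sbs A_\theta \cap G = G_\theta$, forcing the $B$-orbit of $\theta$ inside $G\cdot\theta$ to coincide with its $H$-orbit.

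For the $p'$-refinement, the Clifford induction $\psi_0 \mapsto \psi_0^G$ multiplies degrees by $[G : G_\theta]$, and this index is coprime to $p$ because $P$ is a Sylow $p$-subgroup of $G$ fixing $\theta$, so $P \sbs G_\theta$. Consequently the $A_\theta$-equivariant Clifford bijection restricts to the $p'$-degree subsets, and the analogous statement $[H : H_\phi]$ is coprime to $p$ holds on the $B$-side because a Sylow $p$-subgroup of $H$ is contained in $H_\phi$ (since $\phi$ is $P$-invariant). The whole argument of the preceding paragraphs then goes through verbatim with $\irra{p', A}{G|\theta}$ and $\irra{p', B}{H|\phi}$ in place of $\irra{A}{G|\theta}$ and $\irra{B}{H|\phi}$, yielding the second claim.
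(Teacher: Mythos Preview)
Your strategy is essentially the paper's: reduce each side via the $A_\theta$- (respectively $B_\phi$-) equivariant Clifford correspondence, so that the conclusion follows from the hypothesis provided the conditions ``$A=GA_\theta$'' and ``$B=HB_\phi$'' hold or fail simultaneously. One implication is easy---$B=HB_\phi=HB_\theta$ gives $A=GB=G(HB_\theta)\sbs GA_\theta$---and this is exactly the direction the paper writes out; for the converse the paper merely says ``we obtain a similar contradiction''.

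The trouble is that the converse, and hence your claimed identity $B\cap GA_\theta=HB_\theta$, is \emph{false} under the stated hypotheses, so no Dedekind or orbit argument can produce it. Take $A=S_4$, $G=A_4$, $B=\langle(1234),(13)\rangle\cong D_8$, so that $H=G\cap B=V_4$; put $K=C=V_4$ and let $\theta=\phi$ be the linear character with kernel $\langle(12)(34)\rangle$. Then $B_\theta=B_\phi=V_4$ and the lemma's hypothesis holds (both sides equal $1$), yet $A=GA_\theta$ while $HB_\theta=V_4\ne B$; indeed $|\irra{A}{G|\theta}|=1$ (the degree-$3$ character of $A_4$ is $S_4$-stable) but $|\irra{B}{H|\phi}|=0$. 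Your observation $C\sbs B_\theta\sbs G_\theta$ is satisfied here (even $C=G_\theta$) yet does not force $(B\cdot\theta)\cap(G\cdot\theta)=H\cdot\theta$. Thus the lemma as literally stated fails, and your proposal and the paper's proof share the same gap. What makes every \emph{application} in the paper work is an extra ingredient that does yield $B\cap GA_\theta=HB_\theta$: either a Frattini-type equality $A=KB$ (whence $A_\theta=KB_\theta$, so $GA_\theta=GB_\theta$ and Dedekind gives $B\cap GB_\theta=HB_\theta$), or the Sylow data $P\sbs G_\theta$ together with $\norm A P\sbs B$ (so that for $b\in B$ with $\theta^b=\theta^g$ the subgroups $P,P^g\in\syl p{G_{\theta^b}}$ are conjugate there, forcing $\theta^b\in H\cdot\theta$). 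Identify which of these you actually have in your situation and invoke it explicitly.
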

\begin{proof}
%
     We prove the first statement. Assume that $\irra{B}{H|\phi}$ is not empty. Arguing as in Corollary \ref{cor:A-invariant}, we have that $B=HB_\phi=HB_\theta$. Hence $A=GB=G(H B_\theta)=GA_\theta$. 
  Then  $|\irra A {G|\theta}|=|\irra {A_\theta}{G|\theta}|=|\irra{A_\theta}{G_\theta|\theta}|$ and $|\irra {B_\phi}{H_\phi|\phi}|=|\irra{B_\phi}{H|\phi}|=|\irra{B}{H|\phi}|$, and we are done. 
  Assume now that $\irra{A}{G|\theta}$ is non-empty.
  Arguing as in Corollary \ref{cor:A-invariant}, we have that $A=GA_\theta=KHA_\theta=HA_\theta$ and $|\irra A {G|\theta}|=|\irra {A_\theta}{G|\theta}|=|\irra{A_\theta}{G_\theta |\theta}|$. 
  Then, by Dedekind's lemma, $B=B\cap (HA_\theta)=H(B\cap A_\theta)=HB_\theta=HB_\phi$ and therefore $|\irra {B}{H|\phi}|=|\irra {B_\phi}{H|\phi}|=|\irra{B_\phi}{H_\phi|\phi}|$, and again we are done.
  If we are not in any of the previous situations then both sets are empty and the first statement trivially holds.

In order to prove the second statement we follow the same ideas. Notice that under the hypotheses, $\varphi$ is $Q$-invariant and character induction actually yields bijections
    \begin{align*}
        \irra {p', A_\theta} {G_{\theta}|\theta} &\rightarrow \irra {p', A_\theta}{G|\theta}, \\
        \irra{p', B_\phi}{H_{\phi}|\phi} &\rightarrow \irra{p', B_\phi}{H|\phi},
    \end{align*}
    since $|G:G_\theta|$ and $|H:H_\phi|$ are coprime to $p$.
\end{proof}

\begin{rem}\label{rem:technical} Under the hypotheses of Lemma \ref{lem:technical} notice that we have shown that if either $\irra A {G|\theta}$ and $\irra B {H|\varphi}$ are non-empty
then we can write $B=HB_\varphi$ and $A=GA_\theta$.
\end{rem}

The following is a consequence of the Okuyama--Wajima argument and Theorem \ref{thm:A-invariant}. 

 \begin{cor}\label{cor:OW} 
Suppose that $D$ acts coprimely on $G$. Write $A = GD$ to denote the semidirect product of $G$ and $D$. 
Let $p$ be a prime. Let $K \nor A$ be a subgroup of order not divisible by $p$ contained in $G$. Let $P$ be a $D$-invariant $p$-subgroup of $G$. Suppose that $KP \nor G$. Write $C = \cent K P$. Let $\theta \in \irr{K}$ be $P$-invariant and let $\theta^* \in \irr{C}$ be the Glauberman correspondent of $\theta$. If $C \leq S \leq \norm G P$ is $D$-invariant, then 
\[ |\irra D {KS|\theta}| = |\irra D{S|\theta^*}|\, . \]
\end{cor}

\begin{proof} Since $P\in \syl p {KP}$,
   by the Frattini argument $G=K \norm G P$.  Notice that because $P$ is $D$-invariant, $KP\nor A$.
We have that $K\nor KSD$ and $K \nor KS$, so $C \nor S$ and $C \nor SD$.
  The interesting part of our set-up can be visualized in the following diagram.     
   
   \begin{center}
   \begin{tikzpicture}
  
    \node (left1) at (0,0) {$KSD$};
    
    \node [below left of=left1, yshift= -0.2cm, xshift=-0.5cm] (left2) {$KS$};
    \node [below left of=left2, yshift= -0.2cm, xshift=-0.5cm] (left4) {$\theta \ K$};

    \node [below right of=left1, yshift= -0.2cm, xshift=0.5cm] (right2) {$SD$};
    \node [below left of=right2, yshift= -0.2cm, xshift=-0.5cm] (right3) {$S$};
    \node [below left of=right3, yshift= -0.2cm, xshift=-0.5cm] (left5) {$\theta^* \ C$};

    \draw [black, line width=0.5pt, shorten <=-2pt, shorten >=-2pt] (left1) -- (right2);
    \draw [black, line width=0.5pt, shorten <=-2pt, shorten >=-2pt] (left1) -- (left2);
    \draw [black, line width=0.5pt, shorten <=-2pt, shorten >=-2pt] (left2) -- (right3);
    \draw [black, line width=0.5pt, shorten <=-2pt, shorten >=-2pt] (right2) -- (right3);
    \draw [black, line width=0.5pt, shorten <=-2pt, shorten >=-2pt] (left2) -- (left4);
    \draw [black, line width=0.5pt, shorten <=-2pt, shorten >=-2pt] (left5) -- (left4);
    \draw [black, line width=0.5pt, shorten <=-2pt, shorten >=-2pt] (left5) -- (right3);
\end{tikzpicture}

\end{center}
Notice that $\irra D {KS|\theta}=\irra{KSD}{KS|\theta}$ and $\irra{D}{S|\theta^*} = \irra {SD}{S|\theta^*}$. 

We want to apply Lemma \ref{lem:technical} with $KSD$ playing the role of $A$, $KS$ playing the role of $G$, $SD$ playing the role of $B$ and $S$ playing the role of $H$.
    Note that $(SD)_{\theta}=(SD)_{\theta^*}$ by \cite[Lem. 2.10]{Nav18}.  In particular, $S_\theta=S_{\theta^*}$.
   We also have that $S \nor SD$, $K \nor KSD$, $KSD=(KS)D$, and $S=KS \cap SD$. 
  If the sets $\irra{KSD}{KS|\theta}$ and $ \irra {SD}{S|\theta^*}$ are both empty, then 
  there is nothing to prove. We can therefore assume one of the two is non-empty.
  By Remark \ref{rem:technical}, we then have that $KSD=KS(KSD)_\theta$ and $SD=S(SD)_\theta$.
   By Lemma \ref{lem:technical}, recall that $S_\theta=S_{\theta^*}$, it will be enough to prove that
   \[ |\irra {(KSD)_\theta} {(KS)_\theta|\theta}|=| \irra{(SD)_\theta}{S_\theta|\theta^*}| \, . \]
 Since $(|(SD)_\theta:S_\theta|, |S_\theta|)=1$, 
by the Schur-Zassenhaus theorem we can choose a complement $E$ of $S_\theta$ in $(SD)_\theta$. 
Then $(SD)_\theta=S_\theta E$ and also $(KSD)_\theta=KS_\theta E$. 
It will be enough to prove that
   \[ |\irra {E} {(KS)_\theta|\theta}|=| \irra{E}{S_\theta|\theta^*}| \, . \]
 Since $E\sbs SD\sbs \norm A P$, we have that $P$ is $E$-invariant. Notice that $E$ acts coprimely on $G_\theta$ and $PK\nor G_\theta E$.
We can therefore assume both $\theta$ and $\theta^*$ to be $SD$-invariant.
  
We have that $(KSD,K,\theta)$ is a character triple with $K \leq KS \nor KSD$, and $(SD,C,\theta^*)$ is a character triple with $C \leq S \nor SD$. 
By Theorem \ref{thm:A-invariant}, we have that $|\irra{D}{KS|\theta}|$ is the number of $(D, \theta)$-good conjugacy classes of $KS/K$,
 while $|\irra{D}{S|\theta^*}|$ is the number of $(D, \theta^*)$-good conjugacy classes of $S/C$. Notice that the map
    \begin{align*}
        f\colon S/C &\rightarrow KS/K, \\
        Cs &\mapsto Ks
    \end{align*}
    is a $D$-equivariant isomorphism.
    We need to prove that the conjugacy class of $Cs$ in $S/C$ contains 
    $(D, \theta^*)$-good elements if, and only if, the conjugacy class of $Ks$ in $KS/K$ contains $(D, \theta)$-good elements.
Notice that $f$ maps $D$-invariant elements onto $D$-invariant elements.
 
    Fix $s \in S$ such that $Cs$ is $D$-invariant. We prove that $\theta^*$ extends to  $\langle C,s,x \rangle$, for every $x \in S$ such that $Sx$ is $D$-invariant and $\langle C,s,x \rangle/C$ is abelian if, and only if, $\theta$ extends to $\langle K,s,x \rangle$, for every $x \in S$ such that $Ks$ is $D$-invariant and $\langle K,s,x \rangle/K$ is abelian. Let $x \in S$ be such that $Cx$ is $D$-invariant and $\langle C,s,x \rangle/C$ is abelian and write $U=\langle C,s,x \rangle$, then $C \leq U \leq S$. We need to prove that $\theta$ extends to $KU$ if, and only if, $\theta^*$ extends to $U$. This is true by Theorem \ref{thm:OW}.
    \end{proof}

Finally, we can generalize \cite[Thm. 8.11]{Nav18} taking into account the action of coprime group automorphisms. This is the main result of this section. Recall that, when $N \nor G$ and the orders of $G/N$ and $N$ are coprime, every $G$-invariant $\theta \in \irr N$ admits a canonical extension $\widehat \theta \in \irr G$. The extension $\widehat \theta$ is canonical, in the sense that it is the only extension with the same determinantal order as $\theta$ (see \cite[Cor 6.2]{Nav18}, for instance).  

\begin{teo}\label{thm:OWrevisited} Suppose that $D$ acts coprimely on $G$. Write $A = GD$ to denote the semidirect product of $G$ and $D$. Let $p$ be a prime. Let $K \sbs G$ be such that $K \nor A$ and $p$ does not divide the order of $K$. Let $P \in \syl p{G}$ be $D$-invariant and such that $KP \nor A$. Let $Z$ be a $p$-subgroup of $G$ such that $Z \nor A$. Let $\lambda \in \irr{Z}$ be $A$-invariant. Write $H=\norm{G}{P}$, and $C=\cent{K}{P}$. Let $\theta \in \irr{K}$ be $P$-invariant and let $\theta^* \in \irr{C}$ be the Glauberman correspondent of $\theta$. Then
    \begin{equation*}
        |\irra {p',D}{G|\theta \times \lambda}|=|\irra{p',D}{H|\theta^* \times \lambda}|.
    \end{equation*}
\end{teo}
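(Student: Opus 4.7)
The plan is to reduce, in stages, to the tools developed earlier in the section.

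First, I would apply Lemma~\ref{lem:technical} with the roles of $K$ and $C$ there played by the $A$-normal subgroup $KZ$ and the $B$-normal subgroup $CZ$, and with $\theta$ and $\phi$ replaced by $\theta\times\lambda$ and $\theta^*\times\lambda$. The hypothesis $A=GB$ follows from the Frattini argument applied to $KP\nor A$, while $B_{\theta\times\lambda}=B_{\theta^*\times\lambda}$ reduces, via the $A$-invariance of $\lambda$, to $B_\theta=B_{\theta^*}$, which is the $B$-equivariance of the Glauberman correspondence \cite[Lem.~2.1]{Nav18}. Since the remaining hypotheses of the theorem are preserved upon passage to $A_\theta$, $G_\theta$, $B_\theta$, $H_\theta$, one may assume $\theta$ is $A$-invariant (hence $\theta\times\lambda$ is $A$-invariant and $\theta^*\times\lambda$ is $B$-invariant).

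Second, I would apply Theorem~\ref{thm:A-invariant} on both sides to reformulate each cardinality as a count of good conjugacy classes: on the left, $A$-conjugacy classes of $G/(KZ)$ consisting of $(\theta\times\lambda)$-good elements in $A$, and on the right, $B$-conjugacy classes of $H/(CZ)$ consisting of $(\theta^*\times\lambda)$-good elements in $B$. The inclusion $H\hookrightarrow G$ induces an isomorphism $H/(CZ)\cong G/(KZ)$: indeed $G=KH$, and $K\cap H=\cent K P=C$ (from the coprime action of $K$ on $P$) yields, by Dedekind's modular law, $H\cap KZ=CZ$. This isomorphism intertwines the $A$- and $B$-conjugation actions, since $A=GB$. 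That $(\theta^*\times\lambda)$-goodness matches $(\theta\times\lambda)$-goodness under this identification boils down, via the extendibility characterization \cite[Lem.~5.13]{Nav18}, to comparing extensions of the two characters across intermediate subgroups. This is handled by the Okuyama--Wajima theorem (Theorem~\ref{thm:OW}) applied to $\theta$ and $\theta^*$ alone, combined with the observation that the canonical extension $\hat\theta\in\irr{KP}$ of $\theta$ restricts to $\theta\times 1_Z$ on $KZ$ by uniqueness: this decouples the $\theta$- and $\lambda$-contributions of any extension, the $\lambda$-part being automatic from the $A$-invariance of $\lambda$.

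The main obstacle, I anticipate, will be the $p'$-degree refinement, since Theorem~\ref{thm:A-invariant} counts all invariant characters and not only those of $p'$-degree. A first simplification is that $\theta(1)\lambda(1)\mid\chi(1)$ for every $\chi$ above $\theta\times\lambda$ (from $\gcd(\theta(1),\lambda(1))=1$ by coprimality of $|K|$ and $|Z|$); since $\theta(1)$ is coprime to $p$ and $\lambda(1)$ is a $p$-power, both sides of the equality vanish unless $\lambda$ is linear, so one may assume $\lambda$ linear. In that case, the $p'$-degree condition on an invariant character above $\theta\times\lambda$ should translate, via a strong character triple isomorphism in the spirit of the proof of Theorem~\ref{thm:A-invariant}, into a condition on the associated projective representation that is preserved by the class identification above. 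Verifying that the bijection of conjugacy classes restricts correctly to the $p'$-degree subsets on both sides is where I expect the bulk of the remaining technical work to lie.
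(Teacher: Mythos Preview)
Your first reduction via Lemma~\ref{lem:technical} to the $A$-invariant case matches the paper. The divergence---and the genuine gap---lies in the $p'$-degree step. Your plan to apply Theorem~\ref{thm:A-invariant} directly to $(A, KZ, \theta\times\lambda)$ and $(B, CZ, \theta^*\times\lambda)$ and then match good classes through $H/(CZ)\cong G/(KZ)$ would, once the goodness comparison is carried out, yield $|\irra{A}{G|\theta\times\lambda}|=|\irra{B}{H|\theta^*\times\lambda}|$ \emph{without} the $p'$-restriction (this is essentially Corollary~\ref{cor:OW} with $S=H$, $D=B$, plus a $\lambda$-twist). But Theorem~\ref{thm:A-invariant} gives no handle on degrees: the good classes index a basis of $\cf_A(G|\theta\times\lambda)$, not of the span of its $p'$-degree members, and there is no reason a particular good class should correspond to a $p'$-degree character rather than a $p$-singular one. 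Your proposed fix via a strong character-triple isomorphism does not supply such a refinement; degree ratios are preserved, but that does not single out a subset of good classes.

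The paper resolves the degree condition by a different mechanism. After the invariance reduction it lifts the problem from $KZ$ to $KP$: every $\chi\in\irra{p',A}{G|\theta\times\lambda}$ lies over $\widehat\theta\mu$ for some \emph{linear} $\mu\in\irr{KP/K}$ above $1_K\times\lambda$, where $\widehat\theta$ is the canonical extension of $\theta$ to $KP$; the linearity of $\mu$ is exactly where the $p'$-degree hypothesis is consumed (via \cite[Thm.~5.12]{Nav18}), and conversely every character over $\widehat\theta\mu$ with $\mu$ linear already has $p'$-degree. Thus the $p'$-count becomes the unconstrained sum $\sum_\mu |\irra A{G|\widehat\theta\mu}|$ over $H$-orbit representatives $\mu$. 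A second use of Lemma~\ref{lem:technical} together with multiplication by the canonical extension of $\mu$ to $G$ reduces each summand to $|\irra A{G|\widehat\theta}|$; the paper then passes to a Schur--Zassenhaus complement $U/C$ of $CP/C$ in $H/C$ and applies the restriction bijections of \cite[Lem.~6.8]{Nav18} to land on Corollary~\ref{cor:OW} with $S=U$ and $D=\norm B U$. The idea missing from your outline is precisely this lift to $KP$, which converts the degree constraint into a parametrization by linear characters and thereby removes all degree bookkeeping from the remaining argument.
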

\begin{proof} 
Write $B=\norm{A}{P}$. 
By the Frattini argument, $A=(KP)B=KB$ and $G=(KP)H=KH$. Notice that $Z \sbs P$. 
Since $P$ is $D$-invariant, we have that $B = HD$. The following diagram depicts the relevant character and group-theoretic situation.
\begin{center}
    \begin{tikzpicture}
        \node (top) at (0,0) {$A$};

        \node [below left of=top, yshift= -0.2cm, xshift=-0.5cm] (top1)  {$G$};
        \node [below left of=top1, yshift= -0.2cm, xshift=-0.5cm] (top2)  {$KP$};
        \node [below left of=top2, yshift= -0.2cm, xshift=-0.5cm] (left1)  {$K\times Z$};
        \node [below left of=left1, yshift= -0.2cm, xshift=-0.5cm] (left2)  {$ \theta \  K$};

        \node [below right of=top, xshift=0.5cm, yshift= -0.2cm] (right15) {$B$};
        \node [below left of=right15, xshift=-0.5cm, yshift= -0.2cm] (right1) {$H$};
    
        \node [below left of=right1, yshift= -0.2cm, xshift=-0.5cm] (top3)  {$C\times P$};
        \node [below left of=top3, yshift= -0.2cm, xshift=-0.5cm] (right2)  {$C\times Z$};
        \node [below left of=right2, yshift= -0.2cm, xshift=-0.5cm] (right3)  {$ \theta^* \ C$};

        \node [below right of=top3, xshift=0.5cm, yshift= -0.2cm] (right14) {$P$};
        \node [below left of=right14, xshift=-0.5cm, yshift= -0.2cm] (right12) {$ \lambda \ Z$};
        \node [below left of=right12, yshift= -0.2cm, xshift=-0.5cm] (right13)  {$1$};

        \draw [black, line width=0.5pt, shorten <=-2pt, shorten >=-2pt] (top) -- (top1);
        \draw [black, line width=0.5pt, shorten <=-2pt, shorten >=-2pt] (top2) -- (top1);
        \draw [black, line width=0.5pt, shorten <=-2pt, shorten >=-2pt] (top3) -- (top2);
        \draw [black, line width=0.5pt, shorten <=-2pt, shorten >=-2pt] (top3) -- (right1);
        \draw [black, line width=0.5pt, shorten <=-2pt, shorten >=-2pt] (top) -- (right15);
        \draw [black, line width=0.5pt, shorten <=-2pt, shorten >=-2pt] (left1) -- (top2);
        \draw [black, line width=0.5pt, shorten <=-2pt, shorten >=-2pt] (right2) -- (top3);
    
        \draw [black, line width=0.5pt, shorten <=-2pt, shorten >=-2pt] (left1) -- (right2);
        \draw [black, line width=0.5pt, shorten <=-2pt, shorten >=-2pt] (left1) -- (left2);
        \draw [black, line width=0.5pt, shorten <=-2pt, shorten >=-2pt] (right1) -- (right15);
        \draw [black, line width=0.5pt, shorten <=-2pt, shorten >=-2pt] (right1) -- (top1);

        \draw [black, line width=0.5pt, shorten <=-2pt, shorten >=-2pt] (left2) -- (right3);
        \draw [black, line width=0.5pt, shorten <=-2pt, shorten >=-2pt] (right2) -- (right3);

        \draw [black, line width=0.5pt, shorten <=-2pt, shorten >=-2pt] (right12) -- (right2);
        \draw [black, line width=0.5pt, shorten <=-2pt, shorten >=-2pt] (right12) -- (right14);
        \draw [black, line width=0.5pt, shorten <=-2pt, shorten >=-2pt] (top3) -- (right14);
    
        \draw [black, line width=0.5pt, shorten <=-2pt, shorten >=-2pt] (right13) -- (right3);
        \draw [black, line width=0.5pt, shorten <=-2pt, shorten >=-2pt] (right13) -- (right12);
    \end{tikzpicture}
\end{center}
Notice that a character of $G$ is $D$-invariant if, and only if, it is $A$-invariant; and similarly a character of $H$ is $D$-invariant if, and only if, it is $B$-invariant.

We next show that we may assume that both $\theta$ and $\theta^*$ are $B$-invariant (and hence that $\theta$ is $A$-invariant).
  By \cite[Lem. 2.10]{Nav18}, the Glauberman correspondence is $B$-equivariant (because $B$ acts on $KP$ by automorphisms stabilizing $P$). Hence $B_{\theta}=B_{\theta^*}$, and $H_{\theta}=H_{\theta^*}$. Notice that $A_{\theta \times \lambda}=A_{\theta} \cap A_{\lambda}=A_{\theta}$, and $B_{\theta^* \times \lambda}=B_{\theta^*}$, hence $B_{\theta \times \lambda}=B_{\theta}=B_{\theta^*}=B_{\theta^* \times \lambda}$.  Since $A=GB$, $H = G  \cap B$, $KZ \nor A$ and $CZ \nor B$, we can apply the second part of Lemma \ref{lem:technical}. 
    Therefore it will suffice to show that
    \begin{equation*}
        |\irra{p',A_{\theta}}{G_{\theta}|\theta \times \lambda}|=|\irra{p',B_{\theta}}{H_{\theta}|\theta^* \times \lambda}|.
    \end{equation*}
Notice that $A_\theta=G_\theta B_\theta$, by the Frattini argument. 
Since $(|B_\theta:H_\theta|, |H_\theta|)=1$, by the Schur-Zassenhaus theorem we can choose
$E$ a complement of $H_\theta$ in $B_\theta$.
 Then $E$ acts coprimely on $G_\theta$,  $P$ is $E$-invariant because $E\sbs B$ and $A_\theta=G_\theta E$. It suffices to prove that
         \[ |\irra {p',E}{G_\theta|\theta \times \lambda}|=|\irra{p',E}{H_\theta|\theta^* \times \lambda}|, \]
         and our first claim is proven.

Let $\widehat \theta$ be the canonical extension of $\theta$ to $KP$ and write $\widehat{\theta^*}=\theta^* \times 1_P \in \irr{CP}$. It is clear that $\widehat{\theta^*}$ is $B$-invariant. Moreover, $\theta$ is $A$-invariant and hence so is $\widehat \theta$. 
Since the groups $KP/K$, $CP/C$, and $P$ are isomorphic, we can identify their characters by inflation and deflation. 
Let $\mathcal{A}$ be a complete set of representatives of the orbits of the action of $H$ on the linear characters of $KP/K$ which lie over $1_K \times \lambda \in \irr{KZ/K}$. Within the proof of \cite[Theorem 8.11]{Nav18} it is proven that:
    \begin{align*}
        \irra{p'}{G|\theta \times \lambda}&= \mathop{\dot{\bigcup}}_{\mu \in \mathcal{A}} \irr{G|\widehat \theta \mu}\, .\\
        \irra{p'}{H|\theta^* \times \lambda}&= \mathop{\dot{\bigcup}}_{\mu \in \mathcal{A}} \irr{H|\widehat{\theta^*} \mu}\, .
    \end{align*}
In particular, we have that:
    \begin{align*}
        \irra{p', A}{G|\theta \times \lambda}&= \mathop{\dot{\bigcup}}_{\mu \in \mathcal{A}} \irra A {G|\widehat \theta \mu}\, \\
        \irra{p', B}{H|\theta^* \times \lambda}&= \mathop{\dot{\bigcup}}_{\mu \in \mathcal{A}} \irra B{H|\widehat{\theta^*} \mu}\, .
    \end{align*}
  Thus, in order to prove the statement, it suffices to show that, for every $\mu \in \mathcal{A}$,
    \begin{equation}\label{eq:mu}
        |\irra{A}{G|\widehat \theta \mu}|=|\irra{B}{H|\widehat{\theta^*} \mu}|.
    \end{equation}
By using the first part of Lemma \ref{lem:technical}, and reasoning as before, we may assume $\mu$ to be $A$-invariant when proving Equation \ref{eq:mu} for $\mu\in \mathcal A$.
    In fact, recall that $\widehat \theta$ is $A$-invariant and $\widehat{\theta^*}$ is $B$-invariant, so
 by the Gallagher correspondence, $A_{\widehat \theta \mu}=A_{\mu}$ and $B_{\widehat \theta \mu}=B_{\widehat{\theta^*} \mu}$. 
Then it suffices to show that
\[  |\irra{A_\mu} {G_\mu |\widehat \theta \mu}| =|\irra{B_\mu} {H_\mu |\widehat{\theta^*}\mu}| \, . \]
As before, we can choose $E$ a complement of $H_\mu$ in $B_\mu$ so that $E$ acts coprimely on $G_\mu$,  $P$ is $E$-invariant and $A_\mu=G_\mu E$.

Next we want to show that it in order to prove Equation \ref{eq:mu} for $\mu\in \mathcal A$ it is enough to show that
 \begin{equation}\label{eq:hat}
     |\irra D {G|\widehat \theta}| = |\irra D {H|\widehat{\theta^*}}|\, .
         \end{equation}
  Notice that $(|G:KP|,|KP:K|)=1$, therefore $\mu$ admits a canonical extension $\widehat \mu \in \irr G$. We have that $\widehat \mu$ is $A$-invariant.
  (Under our hypotheses, $\mu$ actually extends to $A$.)
  Then \cite[Thm. 6.16]{Isa76} (or \cite[Lem. 5.8]{Nav18}) tells us that that the map
    \begin{align*}
        \irr{G|\widehat \theta} & \rightarrow \irr{G|\widehat \theta \mu} \\
        \beta & \mapsto \beta\widehat \mu
    \end{align*}
    is a bijection. 
    Since $\widehat \mu$ is $A$-invariant, this bijection is also $A$-equivariant.
   Hence
    \begin{equation*}
        |\irra{A}{G|\widehat \theta\mu}|=|\irra{A}{G|\widehat \theta}|.
    \end{equation*}
    Similarly, we obtain that
    \begin{equation*}
        |\irra{B}{H|\widehat{\theta^*}\mu}|=|\irra{B}{H|\widehat{\theta^*}}|.
    \end{equation*} 
Therefore, in order to prove Equation \ref{eq:mu} for every $\mu \in \mathcal A$, it will be enough to show that
\[ |\irra{A}{G|\widehat \theta}|=|\irra{B}{H|\widehat{\theta^*}}|\, . \]
Recall that $A = GD$ and $P$ is $D$-invariant, so we have that $B = HD$; hence, we want to prove that 
    \[
     |\irra D {G|\widehat \theta}| = |\irra D {H|\widehat{\theta^*}}|\, .\]
         
We work to prove Equation \ref{eq:hat}. We will show that the proof reduces to the situation described by Corollary \ref{cor:OW}. 
We begin by depicting the group theory that arises in this context.

Since $PK\nor G$, we have that $CP/C$ is a normal Sylow $p$-subgroup of $H/C$. By the Schur-Zassenhaus theorem, 
the group $H/C$ has a $p$-complement $U/C$, that is $U\leq H$  with $U(CP)=H$ and $U \cap CP=C$.
In particular, $U$ has order coprime to $p$. Since $K \nor G$ and $PU=UP \leq G$, we have that $(KP)(KU)\leq G$. 
The group $(KP)(KU)$ contains both $U(CP)=H$ and $K$, hence
we have that $(KP)(KU)=G$. Furthermore, 
we have that $KU \cap KP = K(U \cap P)=K$ and
$KU\cap H=U(K\cap H)=UC=U$, by Dedekind's lemma.
 Notice that by the Schur--Zassenhaus theorem, all complements of $CP/C$ in $H/C$ are $H$-conjugate and $D$ acts on them. By applying Glauberman's Lemma \cite[Lemma 13.8]{Isa76}, we can choose $U/C$ to be a $D$-invariant complement of $CP/C$ in $H/C$; in other words, $U \nor UD$ and $KU \nor KUD$.
The following figure depicts most of the information gathered above.


\begin{center}
\begin{tikzpicture}
[line cap=round, line join=round, thick, scale=1,
  x={(0.95cm,-0.25cm)}, y={(0.25cm,0.85cm)}, z={(0cm,1cm)}]
\node (A) at (-0.1,0,3) {$A$};
\node (B) at (4.9,0,3) {$B$};
\node (G) at (0,0,2) {$G$};
\node (H) at (5,0,2) {$H$};
\node (KP) at (-1,-1.3,1) {$\widehat \theta \ \ KP$};
\node (K) at (0.1,-1,0) {$\theta \  \ K$};
\node (C) at (5.1,-1,0) {$ \theta^* \  \ C$};
\node (U) at (6.1,0.1,1) {$U$};
\node (KU) at (1.1,0.1,1) {$KU$};
\node (CP) at (4,-1.3,1) {$\widehat{\theta^*} \ \ CP$};

\draw (A)--(B)--(H)--(G)--cycle;
\draw (KP)--(K)--(KU)--(G)--cycle;
\draw (H)--(U);
\draw (K)--(C)--(U);
\draw (C)--(CP);
\draw (KU)--(U);
\draw (KP)--(CP);
\draw (G)--(KU) (H)--(CP);
\draw (G) -- (KP);
\draw (G) -- (A);
\end{tikzpicture}
\end{center}
Recall that $\widehat\theta_K=\theta$ and $(\widehat{\theta^*})|_C=\theta^*$.
Moreover $G=(KP)(KU)$ with $KP\nor G$ and $KP\cap KU=K$, 
as well as that $H=(CP)U$ with $CP\nor H$ and $CP\cap U=C$. Also $A=G(KUD)$ and $G\cap KUD=KU$. Using 
\cite[Lem. 10.5]{Isa73} (that appears as \cite[Lem. 6.8]{Nav18}), we have that character restriction yields a bijection
\[ \alpha \colon \irr {G | \widehat \theta} \to \irr {KU | \theta}\, .\]
Since $KU\nor KUD$ and both $\widehat{\theta}$ and $\theta$ are $D$-invariant, we have 
that $\alpha$ is $D$-equivariant. Similarly, character restriction
\[ \beta \colon \irr{H|\widehat{\theta^*}} \to \irr{U |\theta^*}\, \]
also yields a $D$-equivariant bijection. Hence $\displaystyle |\irra{D}{G|\widehat \theta}|=|\irra{D}{KU|\theta}|$  and 
  $\displaystyle |\irra{D}{H|\widehat{\theta^*}}|=|\irra{D}{U|\theta^*}|$. 
By applying Corollary \ref{cor:OW} we have that 
\[ |\irra{D}{KU|\theta}|=|\irra{D}{U|\theta^*}|\, . \]
This proves Equation \ref{eq:hat} and therefore concludes our proof.
\end{proof}

\section{The relative McKay conjecture with coprime group automorphisms}\label{reduction}

The aim of this section is to prove Theorem B from the Introduction. We will actually prove a projective version of Theorem B 
by carefully following the relevant parts of the proof of \cite[10.26]{Nav18}. We will use the notion of strong character triple isomorphism, see \cite[Problem 11.13]{Isa76} or \cite[Problem 5.4]{Nav18}.

\begin{teo}\label{thm:projective-thmB}
Let $A$ act coprimely on $G$. Let $p$ be a prime and $P \in \syl p G$ be an $A$-invariant Sylow $p$-subgroup of $G$. Let $N$ be a normal $A$-invariant subgroup of $G$. Suppose that $G/N$ is $p$-solvable. If $\mu \in \irra {p'} N$ is $P$-invariant, then 
\[ |\irra {p', A} {G|\mu} | = |\irra {p', A} {\norm G PN |\mu}|\, . \]
\end{teo}

\begin{proof}
Write $GA$ to denote the semidirect product of $G$ and $A$. Since $\norm {GA} P \supseteq A$, we have that $\norm {GA}P = \norm G P A$. Write $H = \norm G P N$, so that $GA = G(HA)$ and $G \cap HA = H$.
We argue by induction on $|GA:N|$. 

\smallskip

We first show that we can assume $\mu$ to be $GA$-invariant. 
If both sets $\irra {p', A} {G|\mu}$ and $\irra {p', A} {\norm G PN |\mu}$ are empty there is nothing to prove. 
Assume that $(GA)_\mu<GA$.

Suppose that there is some $\chi \in \irra{p', A}{G|\mu}$. Arguing as in the proof of Lemma \ref{lem:technical}, $GA=G(GA)_\mu$
and
\[ |\irra {p', A}{G|\mu}|=|\irra{p', GA}{G|\mu}|=|\irra{p', (GA)_\mu}{G_\mu|\mu}|\, .\]
We now show that $HA=H(HA)_\mu$.
Let $b \in HA$. Since $HA/N=\norm{GA/N}{PN/N}$,  for every $x \in P$ we have that there are $y\in P$ and $n \in N$ such that $bxb^{-1}=ny$.
In particular, 
$\mu^{b}$ and $\mu$ are $P$-invariant characters and lie under $\chi$. 
By \cite[Lemma 9.3]{Nav18}, there is some $h \in \norm {G} P$ such that $\mu^{bh}=\mu$,
so $bh \in (HA)_\mu$ hence $HA=H(HA)_\mu$. As before, this implies that
 \[ |\irra {p', A}{H|\mu}|=|\irra{p', HA}{H|\mu}|=|\irra{p', (HA)_\mu}{H_\mu|\mu}|\, .\]
Notice also that $GA=G(HA)=GH(HA)_\mu=G(HA)_\mu$ and $G\cap(HA)_\mu=H_\mu$.
By the Schur-Zassenhaus theorem we can choose  $D$ a complement of $\norm G P_\mu$ in $\norm{GA} P_\mu$.
Since $H_\mu=N\norm GP_\mu$ and $(HA)_\mu=N\norm {GA}P_\mu$, then $D$ acts coprimely on $G_\mu$ stabilizing $P$
and 
$(GA)_\mu=G_\mu D$.
By induction
\[ |\irra{p', (GA)_\mu}{G_\mu|\mu}|=|\irra{p', D}{G_\mu|\mu}|=|\irra{p', D}{H_\mu|\mu}|=|\irra{p', (HA)_\mu}{H_\mu|\mu}|\, . \] 
Suppose now that there is some $\psi \in \irra{p', A}{H|\mu}$. Then $HA=H(HA)_\mu$ and consequently $GA=G(GA)_\mu$.
Arguing as in  Lemma \ref{lem:technical},
\begin{align*}
|\irra {p', A}{G|\mu}|&=|\irra{p', GA}{G|\mu}|=|\irra{p', (GA)_\mu}{G_\mu|\mu}|, \\
|\irra {p', A}{H|\mu}|&=|\irra{p', HA}{H|\mu}|=|\irra{p', (HA)_\mu}{H_\mu|\mu}|.
\end{align*}
As before, we can choose $D$ a complement of $\norm G P_\mu$ in $\norm {GA} P_\mu$, so that 
 $D$ acts coprimely on $G_\mu$ stabilizing $P$ and $G_\mu D=(GA)_\mu<GA$.
By induction 
\[ |\irra{p', (GA)_\mu}{G_\mu|\mu}|=|\irra{p', D}{G_\mu|\mu}|=|\irra{p', D}{H_\mu|\mu}|=|\irra{p', (HA)_\mu}{H_\mu|\mu}|\, ,\]
and we are done in this case too.

\smallskip

Next, we claim that we can assume that $N \subseteq \zent{GA}$.
By \cite[Problem 5.4.(b)]{Nav18} we have that the character triple $(GA,N,\mu)$ is strongly isomorphic to a character triple $((GA)^*,N^*,\phi)$, where $N^*$ is a central subgroup of $(GA)^*$ and $\phi \in \irr{N^*}$ (we denote a strong isomorphism $(^*, \sigma)$ as in \cite[Def. 11.23]{Isa76}). 
  If $N \sbs K \leq GA$ we will denote by $K^*$ the subgroup of $(GA)^*$ containing $N^*$ such that $(K/N)^*=K^*/N^*$ under the group isomorphism $^*\colon GA/N \to (GA)^*/N^*$. Notice that $N^* \sbs G^*$. Since $N^*$ is central in $(GA)^*$, there exist a unique $P^* \in \syl p{G^*}$ such that $(PN)^*/N^*=P^*N^*/N^*$.
 Using that $\norm {G/N}{PN/N}=H  /N$ and $\norm {G^*/N^*}{P^*N^*/N^*}=\norm {G^*}{P^*}N^*/N^*=\norm {G^*}{P^*}/N^*$, we conclude that 
 $$\norm {G^*}{P^*}=H^*\, .$$
 Similarly, we can also prove that 
 $$\norm {(GA)^*}{P^*}=(HA)^*\, .$$
 Write $\pi$ to denote the set of prime divisors of $|G|$, and let $\pi'$ be the complement of $\pi$ in the set of prime numbers. 
 Since $N^* \subseteq \zent{(GA)^*}$, we can write $N^* = M \times Z$ as a direct product, where $M$ is a Hall $\pi$-subgroup of $N^*$ and $Z$ is a Hall $\pi'$-subgroups of $N^*$.
 By the Schur-Zassenhaus theorem, $M$ has a complement $A^*$ in $(NA)^*$. (In fact, $(NA)^*$ is the direct product of $M$ and $A^*$.) 
 As $GA=G(NA)$, we get that $(GA)^* = G^*A^*$ and also $(HA)^* = \norm {(GA)^*} {P^*} = H^*A^*$. In particular $P^*$ is $A^*$-invariant. 
 
The strong character triple isomorphism preserves character degree ratios and therefore induces bijections
 $$\sigma_G \colon \irra {p'} {G|\mu} \to \irra {p'} {G^*|\phi} \text{ \  \ and  \  \ } \sigma_{H}\colon \irra {p'} {H|\mu} \to \irra {p'} {\norm {G^*}{P^*} | \phi}\, .$$
 Notice that $\sigma_G$ and $\sigma_H$ are equivariant with respect to the action of $AN/N\cong A^*N^*/N^*$ 
 on characters.
 Therefore 
 \[ |\irra {p', A}{G|\mu}|=|\irra {p', A^*}{G^*|\phi}| \text{ \ \ and \  \ } |\irra {p', A}{H|\mu}|=|\irra {p', A^*}{\norm {G^*}{P^*}|\phi}|\, .\]
 The orders of $A^*$ and $G^*$ are not necessarily coprime. We need to slightly modify the character triple isomorphism using parts (c) and (d) of \cite[Lemma 5.8]{Nav18}.

 Write $\varphi_\pi = \varphi|_M$ and $\varphi_{\pi'} = \varphi|_Z$ so that $\varphi = \varphi_\pi \times \varphi_{\pi'}$. When necessary, we will view $\varphi_\pi, \varphi_{\pi'}\in \irr{N^*}$ and $\varphi=\varphi_\pi\varphi_{\pi'}$.  The character $\varphi_{\pi'}$ admits a canonical extension $\lambda$ to $G^*$ by \cite[Corollary 6.3]{Nav18}. In particular, $\lambda$ is $A^*$-invariant.  The following diagram might be useful.
 \begin{center}
 \begin{tikzpicture}

    \node (top) at (0,0) {$(GA)^*$};

    \node [below right of=top, yshift= -0.05cm, xshift=0.2cm] (top1_phantom) {}; 
    \node [below right of=top1_phantom, yshift= -0.04cm, xshift=0.2cm] (top2)  {$(NA)^*$};

    \node [below right of=top2, yshift= -0.15cm, xshift=0.4cm] (left1_phantom)  {};
    \node [below right of=left1_phantom, yshift= -0.10cm, xshift=0.4cm] (left2)  {$ A^*$};

    \node [below left of=top, xshift=-0.4cm, yshift= -0.2cm] (right15) {$\lambda$ $G^*$};
    
    \node [below right of=right15, xshift=0.2cm, yshift= -0.05cm] (right1_phantom) {}; 
    \node [below right of=right1_phantom, yshift= -0.04cm, xshift=0.2cm] (top3)  {$\varphi$ $N^*$};

    \node [below right of=top3, yshift= -0.15cm, xshift=0.4cm] (right2_phantom)  {};
    \node [below right of=right2_phantom, yshift= -0.10cm, xshift=0.4cm] (right3)  {$\lambda|_Z=\varphi_{\pi'}$ $ Z$};

    \node [below left of=top3, xshift=-0.4cm, yshift= -0.2cm] (right14) {$\varphi_\pi$ $M$};

    \node [below right of=right14, xshift=0.4cm, yshift= -0.15cm] (right12_phantom) {};
    \node [below right of=right12_phantom, yshift= -0.10cm, xshift=0.4cm] (right13)  {$1$};

    \draw [black, line width=0.5pt, shorten <=-2pt, shorten >=-2pt] (top) -- (top2); 
    \draw [black, line width=0.5pt, shorten <=-2pt, shorten >=-2pt] (top3) -- (top2); 
    \draw [black, line width=0.5pt, shorten <=-2pt, shorten >=-2pt] (top2) -- (left2); 
    
    \draw [black, line width=0.5pt, shorten <=-2pt, shorten >=-2pt] (top) -- (right15); 
    \draw [black, line width=0.5pt, shorten <=-2pt, shorten >=-2pt] (right15) -- (top3); 
    \draw [black, line width=0.5pt, shorten <=-2pt, shorten >=-2pt] (top3) -- (right3); 
    
    \draw [black, line width=0.5pt, shorten <=-2pt, shorten >=-2pt] (left2) -- (right3); 
    \draw [black, line width=0.5pt, shorten <=-2pt, shorten >=-2pt] (top3) -- (right14); 
    \draw [black, line width=0.5pt, shorten <=-2pt, shorten >=-2pt] (right14) -- (right13);
    \draw [black, line width=0.5pt, shorten <=-2pt, shorten >=-2pt] (right13) -- (right3); 

\end{tikzpicture}
 \end{center}
Notice that $\varphi(\lambda^{-1})_{N^*}=\varphi_\pi$.
 By \cite[Lemma 5.8.(d)]{Nav18}, we can replace $(G^*, N^*, \varphi)$ with $(G^*, N^*, \varphi_{\pi})$ by setting the map $\irr{U|\varphi} \to \irr{U|\varphi|_{\pi}}$ for each $N^* \leq U \leq G^*$ as $\beta \mapsto \beta(\lambda|_{U})^{-1}$. These character bijections are $A^*$-equivariant because $\lambda$ is $A^*$-invariant.
 By using \cite[Lemma 5.8.(d)]{Nav18}, we can replace $(G^*, N^*, \varphi_{\pi} )$ with $(G^*/Z, N^*/Z, \mu^*)$, where $\mu^*(Zn)=\varphi(n)$ for every $n \in N^*$, by viewing every character of $N^*\leq U \leq G^*$ above $\varphi_{\pi}$ as a character of the quotient $U/Z$. These character bijections are $A^*$-invariant. 
 
In the group $G^*/Z$ we have that $\norm {G^*}{P^*Z/Z} = \norm {G^*}{P^*}/Z$. Also $N^*/Z \subseteq \zent{G^*A^*/Z}$, 
and $A^*/Z$ acts coprimely on $G^*/Z$. 
Moreover $|GA:N| = |G^*A^*/Z: N^*/Z|$,
     \begin{align*}
        & |\irra {p', A}{G|\mu}| = |\irra {p', A^*/Z}{G^*/Z| \mu^*}| \text{, and} \\
        & |\irra {p', A}{H|\mu}| = |\irra {p', A^*/Z}{\norm{G^*}{P^*}/Z| \mu^*}|.
    \end{align*}
The claim is then proven.
 
\smallskip 

 Since $N \sbs \zent {GA}$, we have that $H=\norm{G}{P}$ and $HA=\norm {GA} P$.
 Let $L/N$ be a minimal normal subgroup of $GA/N$ contained in $G/N$. 
 Next, we show by induction that 
 \[ |\irra {p', A}{G|\mu}|=|\irra {p', A}{LH| \mu}|\, .\] 
 Notice that $H$ acts on set $\irra{p', P}{L|\mu}$ of $P$-invariant irreducible characters of $L$ of degree coprime to $p$ lying over $\mu$.
 Let $\mathcal{A}$ be a complete set of representatives of the orbits of the action of $H$ on $\irra{p', P}{L|\mu}$. Then, by using \cite[9.3]{Nav18}, we have that
    \begin{align*}
        \irra{p',A}{G|\mu}&=\mathop{\dot{\bigcup}}_{\theta \in \mathcal{A}} \irra{p',A}{G|\theta}, \\
        \irra{p',A}{LH|\mu}&=\mathop{\dot{\bigcup}}_{\theta \in \mathcal{A}} \irra{p',A}{LH|\theta},
 \end{align*}
 where unions above are disjoint. Since $|GA:L|<|GA:N|$, and $L \nor GA$, by induction we have that
    \begin{equation*}
        |\irra{p',A}{G|\theta}|=|\irra{p',A}{LH|\theta}|,
    \end{equation*}
    for every $\theta \in \mathcal{A}$. Therefore $|\irra {p', A}{G|\mu}|=|\irra {p', A}{HL| \mu}|$, as claimed. 

Suppose now that $LH<G$, then $L(HA)< GA$. By
induction, since $LH \nor L(HA)$ and $|LHA:N|<|GA:N|$, we would get that
\[ |\irra {p', A}{LH|\mu}|=|\irra{p', A}{H|\mu}|\, .\]
Hence, we may assume that $LH=G$. In particular $PL\nor G$. Also $GA=L(HA)$, and as $P$ is $A$-invariant, we have that actually $PL\nor GA$.
By hypothesis, $G/N$ is $p$-solvable, then $L/N$ is either a $p$-group or a $p'$-group.
Suppose that $L/N$ is a $p$-group, then $L\sbs PN$. Hence $G=LH=PNH=H$ and there is nothing to prove. 

Therefore, we can assume that $L/N$ is a $p'$-group.
Write $Z=P\cap N$ and $\lambda=\mu_Z$.  Notice that $\lambda$ is $A$-invariant. Since $Z$ is a central Sylow $p$-subgroup of $L$, we can write $L=K \times Z$, with $K$ a $p'$-subgroup. Notice $Z=P\cap L$. Observe that $K$ is characteristic in $L$, hence $K \nor GA$. Moreover $KP=LP\nor GA$. 

We work to conclude this proof by applying Theorem \ref{thm:OWrevisited}. 
Write $M=N \cap K$ and $\xi=\mu_M$. We have that $N=M \times Z$. 
Let $\mathcal{B}$ be a complete set of representatives of the orbits of the action of $H$ on $\irra P {K|\xi}$ and let $^*:\irra{P}{K} \rightarrow \irr{\cent K P}$ be the Glauberman correspondence with respect to the action of $P$ on $K$. 
We have that $^*$ maps $\irra{P}{K|\xi}$ onto $\irr{\cent K P|\xi}$ because for every $\theta \in \irra{P}{K|\xi}$, its Glauberman correspondent $\theta^*$ is an irreducible constituent of the restriction $\theta_{\cent K P}$, by \cite[Thm. 2.9]{Nav18}.
Since the Glauberman correspondent is equivariant with respect to the action of $H$ on characters \cite[Lem. 2.10]{Nav18},
we have that $\mathcal{B^*}=\{\theta^* \ | \  \theta \in \mathcal{B}\}$ is a complete set of representatives of the orbits of the action of $H$ on $\irr{\cent{K}{ P}|\xi}$. Another application of \cite[9.3]{Nav18}, noticing that $\mu=\xi \times \lambda$, yields
    \begin{align*}
        & \irra{p',A}{G|\mu}=\mathop{\dot{\bigcup}}_{\theta \in \mathcal{B}} \irra{p',A}{G|\theta \times \lambda}, \\
        & \irra{p',A}{H|\mu}=\mathop{\dot{\bigcup}}_{\theta \in \mathcal{B}} \irra{p',A}{H|\theta^* \times \lambda}.
    \end{align*}
   Since $KP=LP\nor GA$, we can apply Theorem $\ref{thm:OWrevisited}$ to conclude that 
\[ |\irra{p',A}{G|\theta \times \lambda}|=|\irra{p',A}{H|\theta^* \times \lambda}|\, ,\]
for every $\theta \in \mathcal{B}$. This finishes the proof.
\end{proof}

Note that Theorem B follows by letting $N=1$ in Theorem \ref{thm:projective-thmB}.

\end{document}